\newtheorem{ut}{Theorem}
\newtheorem{up}[ut]{Proposition}
\newtheorem{ul}[ut]{Lemma}
\newtheorem{uobs}[ut]{Observation}
\theoremstyle{remark}
\theoremstyle{definition}
\newtheorem{ur}{Remark}
\begin{document}

\author[D.S. Lipham]{David Sumner Lipham}

\subjclass[2020]{54F15, 37B45} 

\keywords{self-entwined, non-Suslinian, ray, indecomposable, plane topology}

\title[On the closure of a  plane ray]{On the closure of a  plane ray \\ that limits onto itself}

\begin{abstract}We show that the closure of any self-entwined ray in the plane must contain a Cantor set of mutually disjoint continua. This is  false in dimension three. 
\end{abstract}

\maketitle

\

\

\section{Introduction}

In this paper we study a planar form of entanglement or recurrence which gives rise to a complicated  structure within the surrounding space. Specifically we examine properties of planar continua which contain self-entwined rays (i.e.\ rays that  \textit{limit onto themselves}). We show that these continua have  uncountable structures. 

\subsection{Terminology}A \textbf{ray} is a continuous one-to-one image of  $[0,\infty)$. A ray  is \textbf{self-entwined} if no open set intersects it in an arc. This  means  that each point   is revisited to within $\varepsilon$ by points further along the ray (see Section 2.2). A \textbf{continuum} is a compact connected metric space. A continuum is \textbf{indecomposable} if it cannot be written as a union of two proper subcontinua. A continuum  is \textbf{non-Suslinian} if it contains an uncountable collection of pairwise-disjoint, non-degenerate subcontinua. 

Indecomposable continua are known to be non-Suslinian (cf.\ \cite[\S1.1]{lel}). 

\subsection{Motivation and main result} 
Self-entwined rays are commonly found in continua which arise from folding algorithms and chaotic systems. For example, they exist within the arc components of Plykin and Ikeda attractors (see Figures 1 and 2). These planar attractors  are non-Suslinian because they are indecomposable; they   even contain  neighborhoods that are  homeomorphic to  the Cantor fence $2^\omega\times [0,1]$.  

The goal of this paper is to  establish the general result:

\begin{ut}\label{t1}If $X$ is a self-entwined ray in $\mathbb R^2$, then $\overline X$ is non-Suslinian.\end{ut}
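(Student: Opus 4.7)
My plan is to construct the required Cantor family of pairwise disjoint continua locally, in a small disk around a point of $X$. Let $r\colon[0,\infty)\to X$ be the parametrisation, fix $t_0>0$, and choose a small closed round disk $D$ about $p=r(t_0)$ together with a point $b\in\partial D$ that is not an endpoint of any subarc of $X\cap D$ (possible since those endpoints form a countable set on $C=\partial D$). By self-entwinedness, $r^{-1}(D)\cap(t_0+1,\infty)$ has infinitely many bounded components, producing pairwise disjoint compact arcs $A_1,A_2,\dots\subset D$ whose endpoints all lie on $C$ and whose union is dense in $X\cap D$.

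Planarity then organises these arcs. The $A_n$ are disjoint chords of the $2$-cell $D$, so any two are either parallel or nested. Each chord $A_n$ has a unique \emph{small side} subdisk $D_n$, namely the component of $D\setminus A_n$ not containing $b$, and the inclusion $D_m\subsetneq D_n$ turns $\{A_n\}$ into a countable planar tree $T$. The density produced in the first step lifts to a branching statement: whenever $D_n$ contains points of $X$ on both sides of some further chord, infinitely many $A_m\subset D_n$ are nested among themselves at arbitrarily small scales.

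From $T$ I would harvest the Cantor family by embedding the full binary tree $\{0,1\}^{<\omega}$ into $T$. Inductively, at each node $s$ I select a chord $A_{n(s)}$ whose small-side subdisk separates two chosen subcontinua of $X$ of diameter at least $\delta_{|s|}$, for a sequence $\delta_k$ decaying slowly enough to prevent collapse. For each infinite branch $\sigma\in\{0,1\}^\omega$ the nested closed subdisks along $\sigma$ intersect in a non-degenerate continuum $K_\sigma\subset\overline X$, and the non-crossing structure forces $K_\sigma\cap K_\tau=\emptyset$ whenever $\sigma\neq\tau$, yielding the desired uncountable pairwise disjoint family inside $\overline X$.

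The principal obstacle I anticipate is enforcing non-degeneracy of each $K_\sigma$: self-entwinedness asserts only that the ray revisits every neighbourhood, not that its return-pieces have diameter bounded below, so a priori the nested subdisks along a branch could shrink to a point. I expect to overcome this with an inductive planar dichotomy applied at every level: once the ray enters a subdisk $D_n$, either its tail remains in $D_n$ forever (in which case this tail is itself a self-entwined ray in a strictly smaller disk and the whole construction restarts at smaller scale), or the ray exits and re-enters, producing further chords of $D_n$ whose small-side subdisks capture a macroscopic portion of $X$. Converting this dichotomy into quantitative diameter control simultaneously along every branch of the binary tree is where I expect the technical heart of the proof to lie.
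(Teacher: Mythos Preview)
Your outline has two genuine gaps, and they are precisely the two places where the paper's argument diverges from yours.

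\textbf{Branching.} You assert that the density of the chords ``lifts to a branching statement: whenever $D_n$ contains points of $X$ on both sides of some further chord\ldots'', but you never argue that this hypothesis is ever satisfied. A self-entwined ray can approach each of its arcs from a single side only (think of the accessible composant of the Knaster buckethandle: the return chords in any disk are all nested, the tree $T$ is a single branch, and no Cantor family appears). The paper handles this by a global dichotomy you do not invoke: if $\overline X$ fails to be connected im-kleinen on a dense set, it is already non-Suslinian by Fitzpatrick--Lelek; otherwise $\overline X$ contains many subcontinua with nonempty interior, and only under this hypothesis can one prove (via a $\theta$-curve argument, the paper's Lemma~\ref{le5}) that two-sided limit points are dense. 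Without that reduction your tree $T$ need not branch at all.

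\textbf{Non-degeneracy.} You correctly identify this as the principal obstacle, but the dichotomy you propose does not resolve it. When the tail exits $D_n$ and re-enters, the re-entry chord can lie in an arbitrarily thin sliver along $A_n$; nothing forces it to ``capture a macroscopic portion of $X$''. Iterating, the small-side disks along a branch can collapse to a point even while infinitely many chords are produced. The paper avoids this problem entirely by anchoring the construction to a fixed continuum $K\subset\overline X$ (available only in the connected-im-kleinen case) lying at positive distance from a disk $D$: every region $D_\sigma$ in the Cantor tree is bounded by $K$ together with full $K\{x\}K$-minimal arcs of the ray, so each $D_\sigma\cap X$ reaches from $K$ all the way to $D$. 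The nested intersection $K_\tau$ therefore stretches between two fixed sets at positive distance, and boundary bumping gives non-degeneracy for free. Your purely local chord picture has no analogue of this anchor, and I do not see how to manufacture one without importing the im-kleinen reduction.
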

 
Although every ray is a \textbf{rational} space, meaning that it has a basis of open sets with countable boundaries, Theorem \ref{t1} implies that the closure of a self-entwined plane ray cannot be rational (cf.\ \cite[\S1.3]{lel}).

\subsection{Context and limitations}
Jones \cite{jon1,jon2} proved that a plane ray cannot be locally connected unless it is locally compact. From \cite{lm} it follows that each locally connected plane ray is homeomorphic to either $[0,\infty)$, the unit circle $S^1$, or $S^1\cup [0,1]$. In particular, planar self-entwined rays are not  locally connected.

  Tymchatyn \cite[Example 3]{tym} constructed a rational  continuum which is the closure of a self-entwined ray.    So  Theorem \ref{t1} is false outside the plane. Tymchatyn's continuum, which can be embedded into $\mathbb R^3$,  was already known to be non-planar because it is hereditarily locally connected but not finitely-Suslinian \cite{lel}. Moreover, the ray in this continuum is locally connected.

  Curry \cite{cur} showed that  the closure of a self-entwined plane ray is an indecomposable continuum provided that it is $1$-dimensional and non-separating or finitely separating. On the other hand, the Sierpiński carpet, which is $1$-dimensional and locally connected with infinitely-many complementary components, is easily seen to contain a dense self-entwined ray. So the non-Suslinian property cannot always be derived from Curry's result.

\begin{figure}[h]
\includegraphics[scale=0.058]{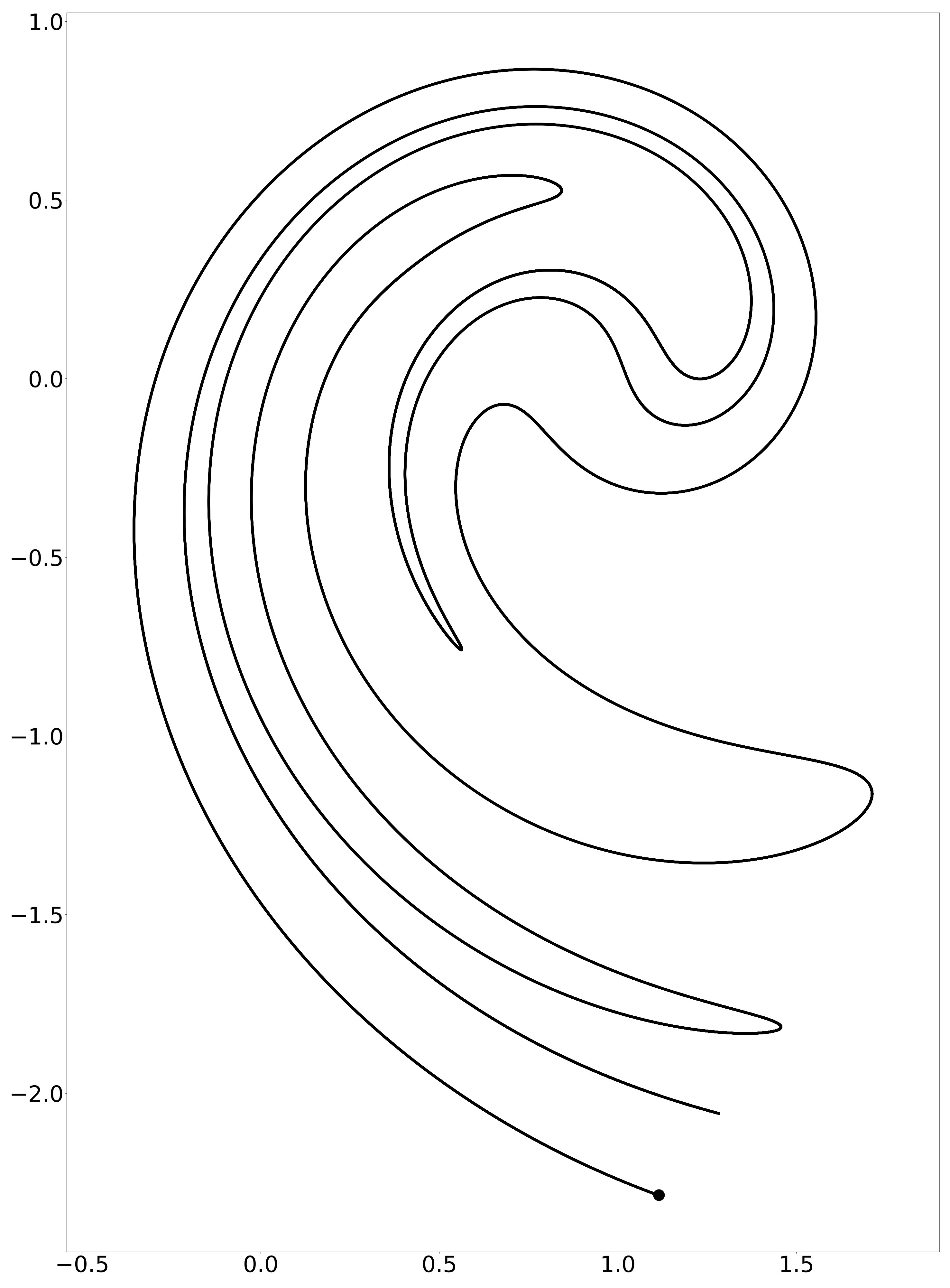}\hspace{0.1in}
\includegraphics[scale=0.058]{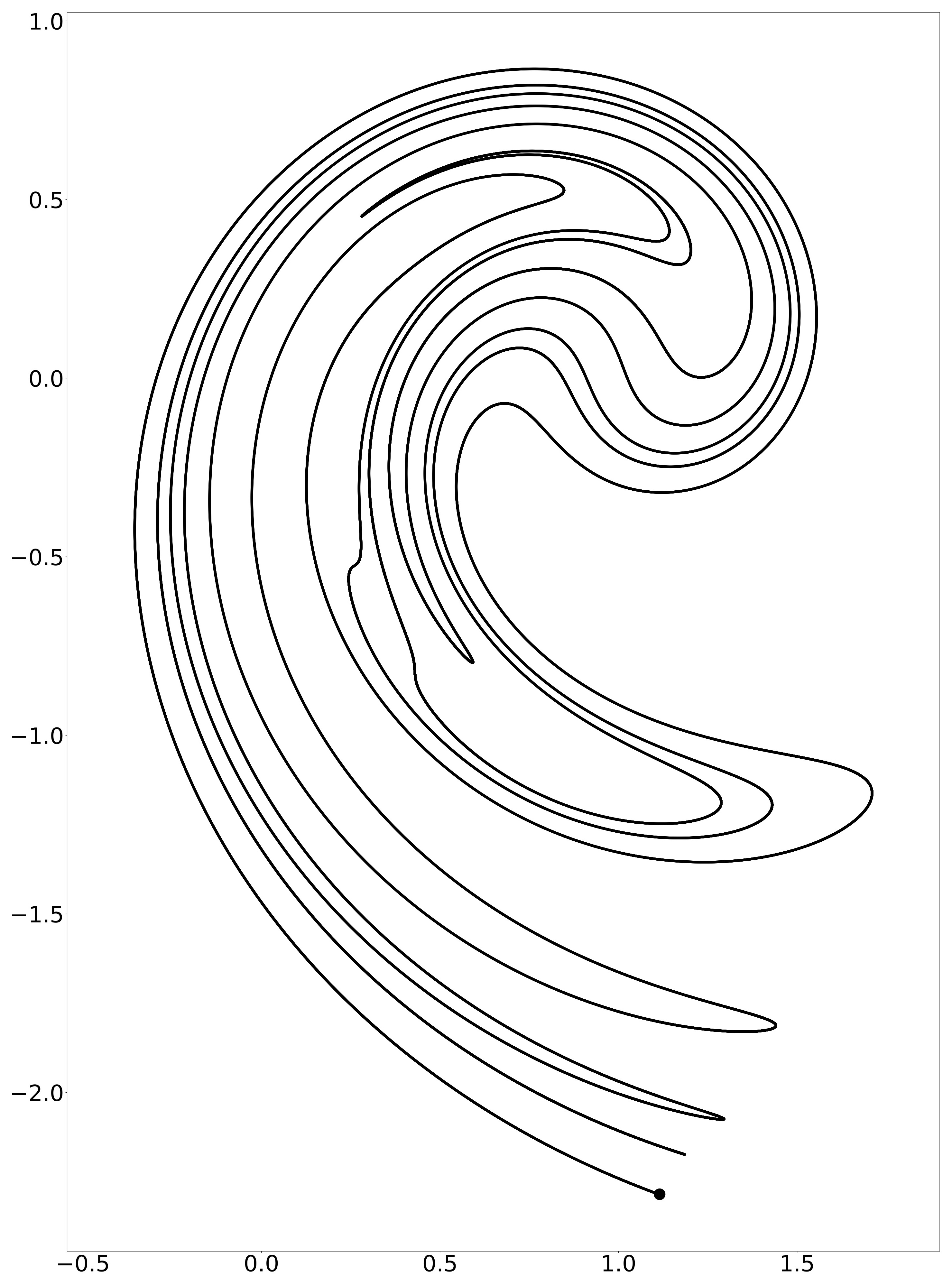}
\caption{The Ikeda map $$\textstyle f(z)=1+0.9z\exp\big(i(0.4-\frac{6}{1+|z|^2})\big)\hspace{2cm}$$  produces a strange attractor $K$ which contains   a fixed point  $p\approx  1.114- 2.285i$. The arc component of $p$ in $K$ is a self-entwined ray with initial point $p$. Iterates of  $$I=\{x-2.285i:0\leq x\leq 1.114\}\hspace{0.8in}$$ approximate initial segments of the ray since $f$ is injective on $I$, and  $I\setminus \{p\}$ is contained in the basin of attraction for $K$. The arcs $f^6[I]$ and $f^7[I]$ are shown above. 
}
\end{figure}

\begin{figure}[h]
\includegraphics[scale=0.065]{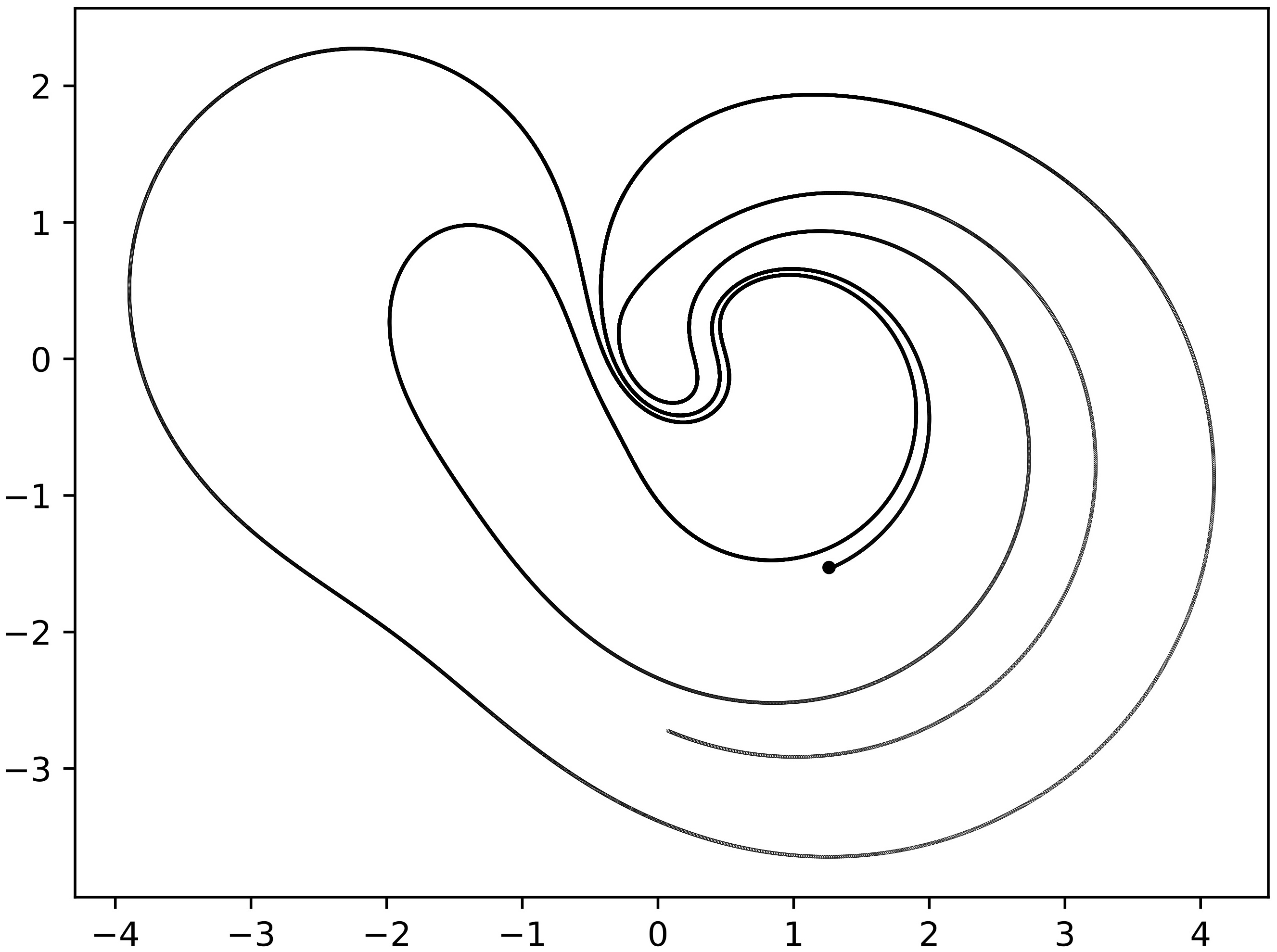}\hspace{0.1in}
\includegraphics[scale=0.065]{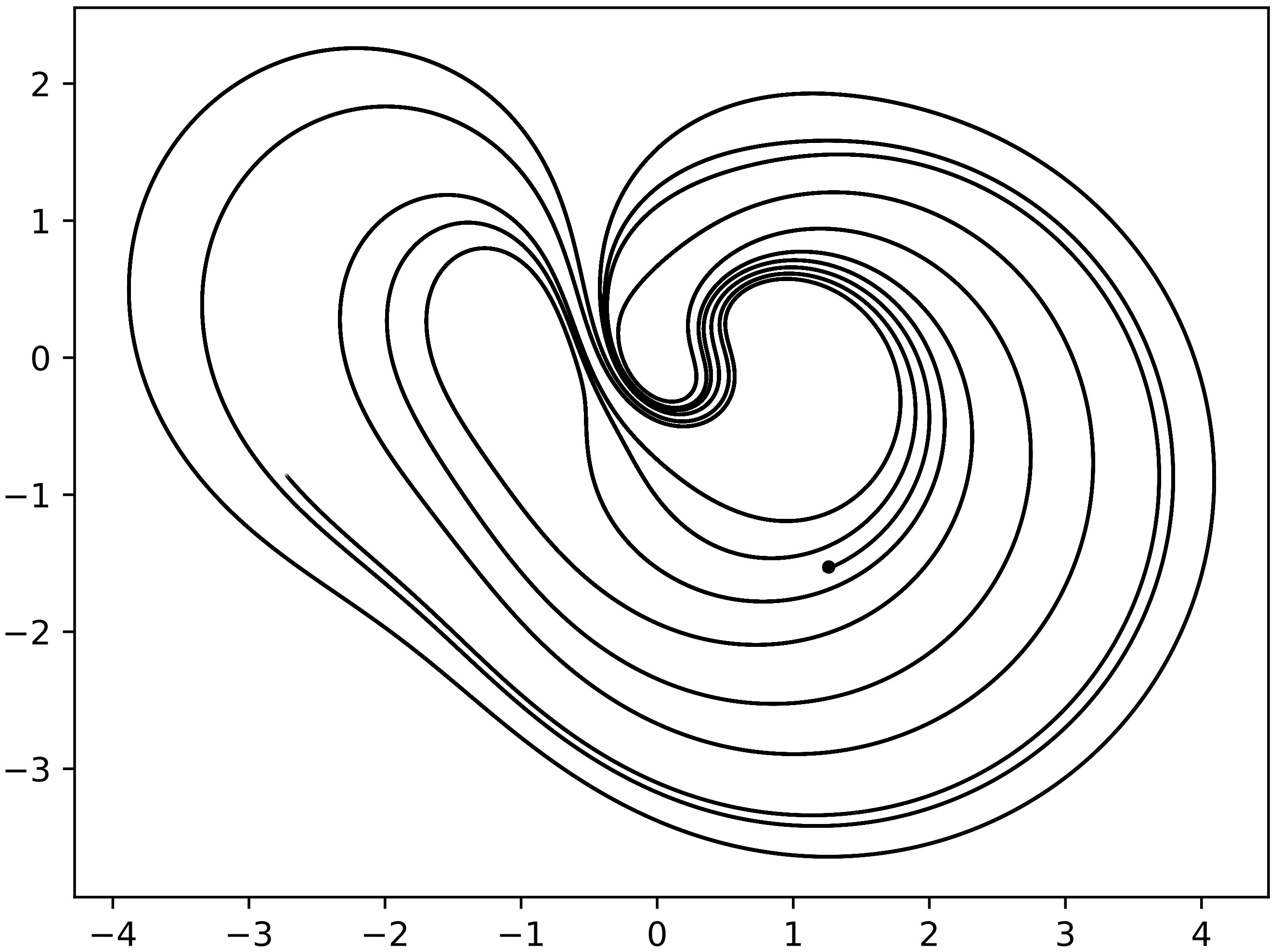}
\caption{A planar Plykin attractor $P$ is generated by the spherical equations of \cite{kun}, followed by a  stereographic projection from the point $\langle 1/\sqrt{2},0,1/\sqrt{2}\rangle$. As with the Ikeda attractor,  approximations of a self-entwined ray in $P$ are obtained by iteration on an arc $I$  which ends at a fixed point of the attractor (in planar coordinates the fixed point is $p\approx \langle 1.26,-1.53\rangle$). This self-entwined ray is remarkable in that it   has bounded curvature.}
\end{figure}

\

The images in Figures 1 and 2 were created in Python via precise numerical calculations.

\section{Preliminaries}

\subsection{Arcs of rays} An \textbf{arc} is a topological  copy of the interval $[0,1]$. If   $X$ is any ray with an associated mapping $f$,  and $[a,b]\subset [0,\infty)$ is a closed and bounded interval, then $f\restriction [a,b]$ is a homeomorphism and $f[a,b]$ is an arc in $X$. By an \textbf{arc of} the ray $X$, we mean an arc of the form $f[a,b]$. It is a   consequence of Sierpiński's theorem \cite[Theorem 5.16]{nad} that if $X$ is not compact then every continuum in $X$ is of this form; see  \cite[Lemma 2.2]{oa}.

\subsection{Definition of self-entwined}Below is an equivalent formulation of the self-entwined property in terms of the ray function.

\begin{up}\label{p2}If $X$ is a ray and $f$ is a continuous one-to-one mapping of $[0,\infty)$ onto $X$, then the following are equivalent. 
\begin{itemize}
\item[\textnormal{(a)}] $X$ is self-entwined;
    \item[\textnormal{(b)}] $X$ has no compact neighborhoods;
    \item[\textnormal{(c)}] each arc of $X$ has empty interior in $X$;
    \item[\textnormal{(d)}] for every $x\in X$ there exists a sequence $r_n\in [0,\infty)$ such that $r_n\to \infty$  and $f(r_n)\to x$. 
\end{itemize}
\end{up}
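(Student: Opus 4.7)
The plan is to prove (c) $\Leftrightarrow$ (d), (a) $\Leftrightarrow$ (c), and (b) $\Leftrightarrow$ (c) separately. Throughout I will use that $f\restriction[a,b]$ is a homeomorphism onto $f[a,b]$, and (for non-compact $X$) the Section 2.1 lemma that every continuum in $X$ has the form $f[a,b]$.

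For (c) $\Leftrightarrow$ (d), both directions unpack the definitions. If (d) fails at $x$, then $x$ has an open neighborhood $U$ in $X$ missing $f[T,\infty)$, so $U \subseteq f[0,T]$ gives the arc $f[0,T]$ a non-empty interior, violating (c). Conversely, if $f[a,b]$ has non-empty interior $U$, pick $x = f(t) \in U$ with $a < t < b$; any sequence $f(s_n) \to x$ eventually lies in $U \subseteq f[a,b]$, forcing $s_n \in [a,b]$ by injectivity, so $s_n \not\to \infty$.

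For (a) $\Leftrightarrow$ (c), if $f[a,b]$ has non-empty interior $U$ in $X$, I would extract a subinterval $(a',b') \subset [a,b]$ with $f((a',b')) \subseteq U$ and verify that $f((a',b'))$ is open in $X$ (any sufficiently small $X$-neighborhood of a point of $f((a',b'))$ contained in $U$ must lie in $f((a',b'))$); this produces a relatively open subset of $X$ homeomorphic to an interval, hence an open set of the ambient plane meets $X$ in an arc, violating (a). Conversely, if some open $U$ meets $X$ in a topological arc $A$, a compact sub-arc of $A$ is a continuum in $X$, hence equals some $f[a,b]$ by the Section 2.1 lemma, and its relative interior is open in $X$ while sitting inside $f[a,b]$, violating (c).

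The most delicate step is (c) $\Rightarrow$ (b). Arguing the contrapositive, assume $X$ has a compact neighborhood at some point and pick an open $V$ with $\overline V$ compact. The key move is to apply the Baire category theorem to the compact metric (hence Baire) space $\overline V$, written as $\overline V = \bigcup_{n \geq 0}(\overline V \cap f[n,n+1])$: some $\overline V \cap f[n,n+1]$ has non-empty interior $W'$ in $\overline V$. The main obstacle is that $W'$ is only relatively open in $\overline V$ rather than in $X$; to convert to openness in $X$, write $W' = W'' \cap \overline V$ for $W''$ open in $X$, and use the density of $V$ in $\overline V$ to conclude that $W'' \cap V$ is non-empty, open in $X$, and contained in $f[n,n+1]$, so (c) fails. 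The reverse (b) $\Rightarrow$ (c) is immediate: if $f[a,b]$ has non-empty interior $U$, then $\overline U \subseteq f[a,b]$ is a compact neighborhood of every point of $U$.
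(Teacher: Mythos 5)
Your decomposition and all the key tools match the paper's (very terse) proof: the Baire category theorem drives (b) $\Leftrightarrow$ (c), (c) $\Leftrightarrow$ (d) is definition-unwinding, and the Section 2.1 lemma of Aarts--Oversteegen is what converts a relatively open arc back into an arc \emph{of the ray}, i.e.\ one of the form $f[a,b]$. Your arguments for (c) $\Leftrightarrow$ (d) and (b) $\Leftrightarrow$ (c) are correct; in particular the density trick for upgrading ``open in $\overline V$'' to ``open in $X$'' is exactly the point that makes the Baire argument go through.

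The one genuine gap is in your direction $\neg$(a) $\Rightarrow$ $\neg$(c). The Section 2.1 lemma asserts that every continuum in $X$ has the form $f[a,b]$ \emph{only when $X$ is not compact}, and compact rays exist (e.g.\ $t\mapsto e^{2\pi i t/(1+t)}$ parametrizes the circle injectively and continuously); for such an $X$ a compact sub-arc of $A$ need not be an arc of the ray, so the step ``hence equals some $f[a,b]$'' fails as written. The paper sidesteps this by proving that (b) and (c) together imply (a), rather than deriving (a) from (c) alone: with (b) in hand, $X$ cannot be compact (it would be a compact neighborhood of each of its points), so the lemma applies. Your version is easily repaired without changing its structure: if $X$ is compact, then $X=\bigcup_n f[0,n]$ and Baire already yields some $f[0,n]$ with non-empty interior, so $\neg$(c) holds outright; otherwise $X$ is non-compact and your invocation of the lemma is legitimate. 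With that one sentence added, the proposal is a complete proof.
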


\begin{proof}(b)$\Leftrightarrow$(c) by the Baire category theorem. (c)$\Leftrightarrow$(d) is obvious. Clearly (a)$\Rightarrow$(c), and ((b)+(c))$\Rightarrow$(a) by \cite[Lemma 2.2]{oa}. \end{proof}

\subsection{Connectedness im-kleinen}A space $X$ is said to be  \textbf{connected im-kleinen} at a point $x\in X$ if every neighborhood of $x$ contains a connected neighborhood of $x$.

The following is due to Fitzpatrick and Lelek \cite{fit}.

\begin{up}\label{p3}Let $X$ be a continuum. If $X$ is Suslinian, then $X$ is connected im-kleinen at each point of a dense subset. \end{up}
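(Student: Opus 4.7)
The plan is to argue by contrapositive: if the set of points at which $X$ is connected im-kleinen is not dense, then $X$ contains an uncountable pairwise disjoint family of non-degenerate subcontinua, contradicting the Suslinian hypothesis. Fix a nonempty open $U \subseteq X$ disjoint from the CIK points, so every $x \in U$ witnesses a failure of CIK.

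First I would quantify the failure at each $x \in U$. Choose $\varepsilon > 0$ small enough that $A_x := \overline{B(x,\varepsilon)} \cap X$ is a proper subset of $X$ containing $x$ in its interior. Since $X$ is not CIK at $x$, the component $C_x$ of $x$ in $A_x$ is not a neighborhood of $x$, so there is a sequence $y_n \to x$ whose components $K_n$ in $A_x$ are each distinct from $C_x$; selecting the $y_n$ from pairwise distinct components makes the $K_n$ themselves pairwise disjoint. Each $K_n$ is a proper subcontinuum of $A_x$, so by the Boundary Bumping Theorem it meets $\partial B(x,\varepsilon)$, whence $\mathrm{diam}(K_n) \geq \varepsilon - d(x,y_n) \geq \varepsilon/2$ eventually. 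Thus every $x \in U$ is a cluster point of an infinite pairwise disjoint ``fan'' of non-degenerate subcontinua of $X$, each of diameter at least $\varepsilon/2$.

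A Baire category reduction uniformizes $\varepsilon$: the sets $U_n := \{x \in U : \text{the preceding analysis works with } \varepsilon = 1/n\}$ cover $U$, so some $\overline{U_n}$ has nonempty interior in $U$, and replacing $U$ by a smaller open $V \subseteq U$ I may assume a single $\varepsilon > 0$ works on a dense subset of $V$.

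It remains to assemble these countably infinite local fans into an uncountable pairwise disjoint global family. I would construct a Cantor scheme $\{V_s : s \in 2^{<\omega}\}$ of nonempty open subsets of $V$ with $\overline{V_{s0}} \cup \overline{V_{s1}} \subset V_s$ and $\overline{V_{s0}} \cap \overline{V_{s1}} = \emptyset$, choosing at each node a non-CIK point $x_s \in V_s$ and a continuum $K_s$ from the fan at $x_s$; along each branch $\alpha \in 2^\omega$ a Hausdorff-convergent subsequence of $\{K_{\alpha\restriction n}\}_n$ in the hyperspace $C(X)$ produces a non-degenerate continuum $L_\alpha$, and the scheme's separation should force $\{L_\alpha : \alpha \in 2^\omega\}$ to be pairwise disjoint. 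The principal obstacle is precisely this last step: the $K_s$ have diameters bounded below by $\varepsilon/2$ while the $V_s$ shrink along branches, so the $K_s$ cannot be confined to $V_s$. Overcoming this requires either iterated Baire reductions shrinking the local $\varepsilon$ with the tree's depth, or careful use of the freedom in choosing which fan element $K_s$ to pick so that continua from different subtrees of the Cantor scheme remain spatially separated.
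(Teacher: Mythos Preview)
Your contrapositive route is quite different from the paper's, and the obstacle you yourself flag in the Cantor-scheme step is a genuine gap, not a detail. The continua $K_s$ have diameter at least $\varepsilon/2$ while the cells $V_s$ shrink, so each $K_s$ protrudes far outside $V_s$; nothing in the construction keeps continua attached to different branches disjoint, and neither proposed remedy resolves this cleanly. Shrinking $\varepsilon$ with tree depth risks making the limits $L_\alpha$ degenerate, and the freedom in selecting fan members does not separate fans at nearby non-CIK points, which may well consist of the very same continua. Your step-one analysis is essentially correct and does produce, at each non-CIK point, a countably infinite disjoint family of subcontinua of uniform size; but amalgamating these local countable families into a single \emph{uncountable} disjoint family is exactly where the work lies, and the plan does not accomplish it.

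The paper argues directly and sidesteps the difficulty. Given any nonempty open set, choose a proper closed ball $\overline{B}$ inside it. Boundary bumping forces every component of $\overline{B}$ to meet $\partial B$, so the open ball $B$ is covered by the non-degenerate components; these are pairwise disjoint subcontinua of $X$, hence only countably many by the Suslinian hypothesis, and each is closed. Baire category then forces one of them to have nonempty interior in $X$. Iterating inside that interior with diameters tending to zero yields a nested sequence of continua-with-interior shrinking to a single point, which is connected im-kleinen by construction. The key move is to use the Suslinian hypothesis \emph{positively}, as a countability bound that feeds a Baire argument, rather than trying to manufacture an uncountable disjoint family from local non-CIK data.
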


\begin{proof}[Idea of proof]If $U$ is a non-empty open subset of $X$ that has only countably many non-degenerate components, then some  component of $U$ must have interior in $X$ by the Baire category theorem. So if $X$ is Suslinian, it is possible to recursively define a decreasing sequence of continua with interiors in $X$, which intersect down to a single point of $X$ that is contained in each interior. \end{proof}

\subsection{Discs and theta curves} A \textbf{$\theta$-curve} is a space made of three arcs $\alpha_1,\alpha_2,\alpha_3$ which have the same endpoints $a$ and $b$ and are otherwise disjoint. 

It is a standard fact of plane topology that one of the three arcs comprising a $\theta$-curve is   separated from $\infty$ by the union of the  other two.  Another fact relevant to the proposition below is  that if $F$ is a closed subset of $\mathbb R^2$ which separates two points $p$ and $q$ (that is, $p$ and $q$ belong to different components of $\mathbb R^2\setminus F$), then some component of $F$ separates $p$ and  $q$. This  is a well-known corollary of Janiszewski's theorem.

\begin{up}\label{p4}Let $\theta=\alpha_1\cup \alpha_2\cup \alpha_3$ be a theta curve of arcs with endpoints $a$ and $b$, such that $\alpha_1\cup \alpha_3$ separates $\alpha_2$ from $\infty$. Let $D$ be a closed disc missing $a$ and $b$. If  $p\in \alpha_1\cap D^\mathrm{o}$ and $q\in \alpha_3\cap D^\mathrm{o}$, then there  exists an arc $\beta\subset \alpha_2\cap D$ which separates $p$ from $q$ in $D$, and is such that $\beta\cap \partial D$ consists only  of the two endpoints of $\beta$.  \end{up}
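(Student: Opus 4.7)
The overall plan is to reduce to the Janiszewski-type corollary stated just before the proposition. I would first show that the closed set $\alpha_2\cap D$ separates $p$ from $q$ inside $D$; the corollary then supplies a component $\beta_0$ of $\alpha_2\cap D$ that itself separates them; finally I would trim $\beta_0$ so that it meets $\partial D$ only at its endpoints.

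For the separation step, suppose toward contradiction that there is an arc $\gamma\subset D\setminus \alpha_2$ from $p$ to $q$. Trimming $\gamma$ so that it meets $\theta$ only at $\{p,q\}$, the set $\gamma\setminus\{p,q\}$ lies in one of the three components of $\mathbb R^2\setminus\theta$. Of these --- the unbounded region $R_\infty$ with boundary $\alpha_1\cup\alpha_3$, and the two bounded regions $R_{12},R_{23}$ which by hypothesis lie inside $\alpha_1\cup\alpha_3$ --- only $R_\infty$ is adjacent to both $p$ and $q$, so $\gamma\subset\overline{R_\infty}$. Now $\gamma$ together with the two subarcs $\gamma_a,\gamma_b$ into which $p$ and $q$ divide $\alpha_1\cup\alpha_3$ forms a new theta curve, and the plane-topology fact recalled in the paper forces one of its three arcs to be separated from $\infty$ by the other two; this cannot be $\gamma\subset\overline{R_\infty}$, so without loss of generality it is $\gamma_b$, and hence the Jordan curve $C=\gamma\cup\gamma_a$ encloses $b$. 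Since $\gamma\subset D$ and $D$ is simply connected with $b\notin D$, this produces a loop in $D$ of nonzero winding number about $b$, a contradiction.

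Once $\alpha_2\cap D$ separates $p$ from $q$ in the disc $D$, the Janiszewski-type corollary gives a component $\beta_0$ of $\alpha_2\cap D$ which itself separates them; as a connected closed subset of the arc $\alpha_2$, $\beta_0$ is a subarc, necessarily non-degenerate (since a single point cannot separate two interior points of $D$), and its endpoints lie on $\partial D$ because they are limits along $\alpha_2$ from outside $D$ while $a,b\notin D$. If $\beta_0$ already has its interior in $D^\mathrm{o}$, take $\beta:=\beta_0$; otherwise decompose the open set $\beta_0\cap D^\mathrm{o}$ into its open subarcs of $\alpha_2$, whose closures are pairwise disjoint chords of $D$ each meeting $\partial D$ only at its two endpoints, and pick one with $p$ and $q$ on opposite sides (such a chord exists since $p,q$ lie in different components of $D\setminus\beta_0$). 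The main obstacle is the first step: the winding-number contradiction requires care because the Jordan curve $C$ itself is not contained in $D$, its arc $\gamma_a$ passing through $a\notin D$, so producing the actual loop in $D$ needs some approximation or surgery inside $D$.
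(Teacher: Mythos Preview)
Your overall plan matches the paper's: show that $\alpha_2\cap D$ separates $p$ from $q$ in $D$, invoke the Janiszewski-type corollary to get a separating component, then extract $\beta$. For the last step the paper is a touch cleaner---it applies the corollary in $D^{\mathrm o}$ rather than in $D$, takes a component $\gamma$ of $\alpha_2\cap D^{\mathrm o}$, and sets $\beta=\overline\gamma$, which automatically meets $\partial D$ only at its two endpoints---but your trimming variant is fine too. A small imprecision: when you ``trim $\gamma$ so that it meets $\theta$ only at $\{p,q\}$'', what you can actually obtain is a subarc from some $p'\in\alpha_1\cap D$ to some $q'\in\alpha_3\cap D$ with interior disjoint from $\theta$; the endpoints need not remain $p,q$. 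This is harmless for the rest of your argument.

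The genuine gap is exactly the one you flag, and it cannot be repaired by surgery inside $D$. After trimming you correctly obtain a Jordan curve $C=\gamma'\cup\gamma_a$ with winding number $\pm1$ about $b$. But every path in $D$ from $p'$ to $q'$ is homotopic to $\gamma'$ rel endpoints in $D\subset\mathbb R^2\setminus\{b\}$, so replacing $\gamma_a$ by any path in $D$ yields a loop of winding number $0$ about $b$: the nontrivial winding lives entirely in $\gamma_a$, and you cannot push $\gamma_a$ into $D$ without crossing $b$. The paper's device sidesteps this completely. Instead of arguing by contradiction inside $D$, it adds an auxiliary arc $\alpha_4$ from $a$ to $b$ lying in the unbounded component $R_\infty$ of $\mathbb R^2\setminus(\alpha_1\cup\alpha_3)$ and chosen with $\alpha_4\cap D=\varnothing$ (possible since $a,b\notin D$). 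Then $\alpha_2\cup\alpha_4$ is a simple closed curve which, because $\alpha_2$ and $\alpha_4$ lie on opposite sides of $\alpha_1\cup\alpha_3$, separates $\alpha_1\setminus\{a,b\}$ from $\alpha_3\setminus\{a,b\}$, hence $p$ from $q$. Since $\alpha_4\cap D=\varnothing$, the set $\alpha_2\cap D=(\alpha_2\cup\alpha_4)\cap D$ separates $p$ from $q$ in $D$. This one-line construction is the missing idea.
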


\begin{proof}[Sketch of proof]Let $\alpha_4$ be an arc that lies in the unbounded component of $\mathbb R^2\setminus (\alpha_1\cup \alpha_3)$, which has endpoints $a$ and $b$, and is such that $\alpha_4\cap D=\varnothing$. The simple closed curve $\alpha_2\cup \alpha_4$ separates $p$ and $q$. It follows that $\alpha_2\cap D$ separates $p$ from $q$ in $D$. Thus some component $\gamma$ of $\alpha_2\cap D^\mathrm{o}$ separates $p$ from $q$ in $D^\mathrm{o}$. Let $\beta=\overline\gamma$ (see Figure 3).\end{proof}

\begin{figure}[h]\includegraphics[scale=0.38]{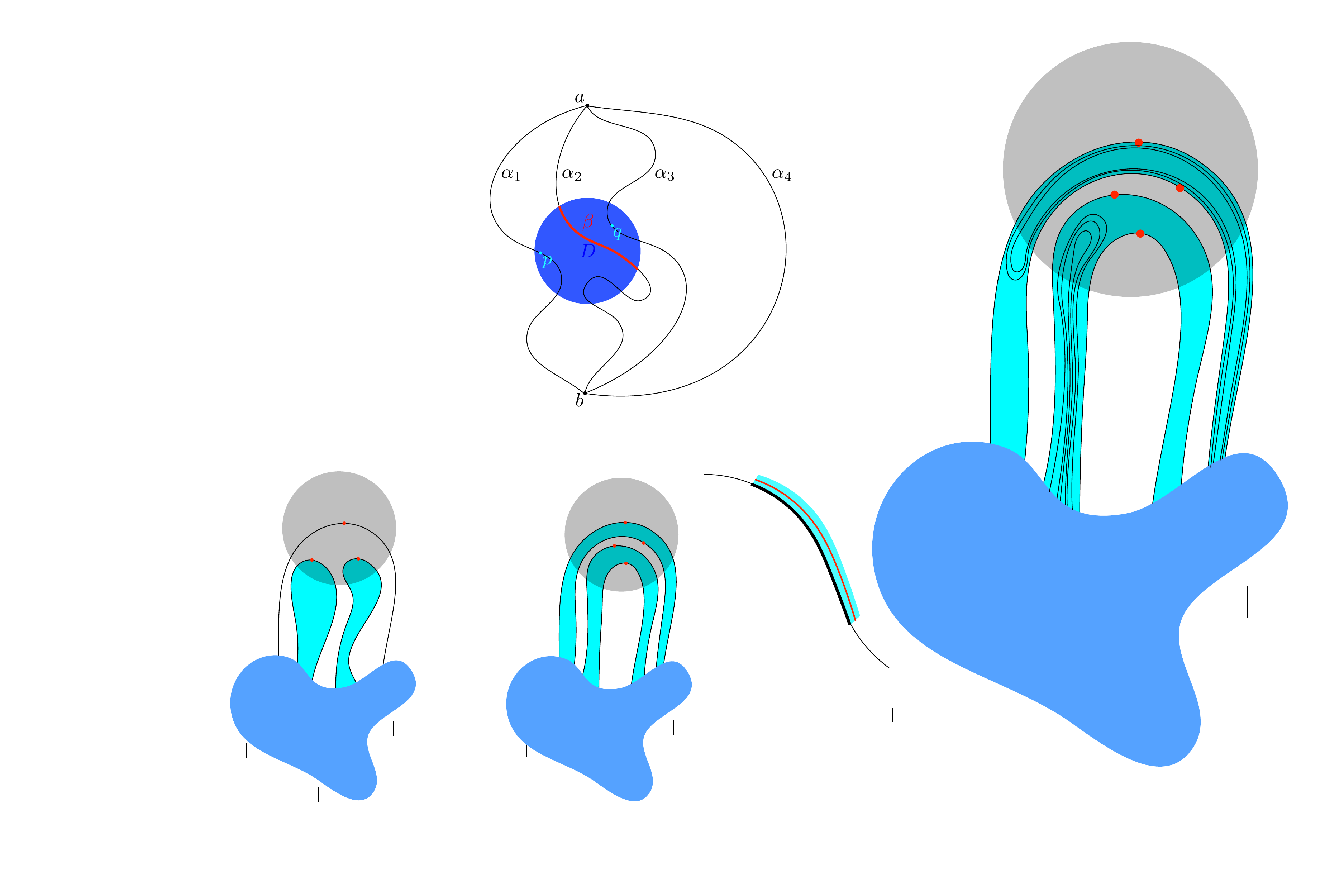}
\caption{Illustration for Proposition \ref{p4}.}
\end{figure}

\subsection{Notation} For any set $A\subset \mathbb R^2$ we let  $\overline A$, $\partial A$, and $A^{\mathrm{o}}$ denote the closure, boundary, and  interior of $A$, respectively, relative to the topology of $\mathbb R^2$. We denote by $2^n$ the set of functions from $\{0,\ldots, n-1\}$ to $\{0,1\}$. Thus an element  $\sigma\in 2^n$ can be viewed as a  binary sequence of length $n$. That is,  $\sigma=\langle \sigma(0),\ldots,\sigma(n-1)\rangle$ where $\sigma(i)\in \{0,1\}$ for each $i<n$.  $$2^{<\omega}=\bigcup _{n=0}^\infty 2^n$$ denotes the set of all finite binary sequences. Finally,  $2^\omega$ is the set of all infinite binary sequences index by $\omega=\{0,1,2,3,\ldots\}$. If $\sigma\in 2^{<\omega}$, then $\sigma^\frown 0$  and $\sigma^\frown 1$ represent the one-element extensions of $\sigma$ obtained by appending $0$ and $1$, respectively. If $\tau\in 2^{ \omega}$ and $n<\omega$, then $\tau\restriction n$ is the sequence made of the first $n$ terms of $\tau$.

\section{Rays that limit to  arcs on both sides}

Let $X$ be a ray in $\mathbb R^2$. Let $f$ be a continuous one-to-one mapping of $[0,\infty)$ onto $X$. For each integer $n\geq 1 $ let $S_n$ be a simple closed curve containing the arc $f[0,n]$. Let $U_n$ and $V_n$ be the components of $\mathbb R^2\setminus S_n$. Given $t>0$ and an integer $n>t$, we say that $X$ \textbf{limits to $f(t)$ from both sides of} $f[0,n]$ if  $f(t)\in \overline{U_n\cap X}\cap \overline{V_n\cap X}$. 


A point $x=f(t)\in X$ is a \textbf{two-sided limit point of }$X$ if there exists an integer $n>t$ such that  $X$ limits to $f(t)$ from both sides of $f[0,n]$.


Let $A,B,C\subset \mathbb R^2$. An arc $\alpha\subset \mathbb R^2$ is \textbf{$ABC$-minimal} if $\alpha\cap A$ and $\alpha\cap C$ are the endpoints of $\alpha$, and $\alpha\cap B\neq\varnothing$. It is  easy to prove that if $A$, $B$, and $C$ are closed and pairwise-disjoint,  and $\alpha$ is  any arc meeting all three sets, then $\alpha$ contains an arc which is $ABC$,  $BAC$, or $ACB$-minimal.

\begin{ul}\label{le5}If $X$ is self-entwined and $\overline X$ is connected im-kleinen at a dense set of points, then the set of two-sided limit points of $X$ is dense in $X$.\end{ul}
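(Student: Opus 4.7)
The plan is to argue by contradiction: assume some $x_0 = f(t_0) \in X$ and $\varepsilon > 0$ admit no two-sided limit point of $X$ within $\varepsilon$ of $x_0$.

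Using the density of connectedness im-kleinen points, I would pick $p \in \overline X$ within $\varepsilon/3$ of $x_0$ at which $\overline X$ is connected im-kleinen, and extract a subcontinuum $K \subset \overline X$ of small diameter with $p \in K^\mathrm{o}$, where $K^\mathrm{o}$ denotes the interior of $K$ relative to $\overline X$; so $K \subset B(x_0,\varepsilon)$. The density of $X$ in $\overline X$ and continuity of $f$ then furnish a short arc $A = f[s_0-\delta, s_0+\delta] \subset K^\mathrm{o}$.

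Next, fix an integer $N$ with $s_0 + \delta < N$ and $0 < s_0 - \delta$, so that $A \subset f[0,N] \subset S_N$. For each $s$ in the parameter interval, the assumption that $f(s)$ is not two-sided with respect to $S_N$ forces $s$ into one of the open sets $P_N = \{s : f(s) \notin \overline{X \cap U_N}\}$ or $Q_N = \{s : f(s) \notin \overline{X \cap V_N}\}$. The crux is that $P_N \cap Q_N \cap [s_0-\delta, s_0+\delta] = \emptyset$: otherwise a neighborhood of $f(s)$ would meet $X$ only in $X \cap S_N$, and the late returns $f(r_k) \to f(s)$ provided by Proposition \ref{p2}(d) would, for $r_k > N$, all lie on the cap arc $S_N \setminus f[0,N]$, forcing $f(s) \in \overline{S_N \setminus f[0,N]} \cap f[0,N] = \{f(0), f(N)\}$, contrary to $s \in (0,N)$. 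Connectedness of $[s_0-\delta, s_0+\delta]$ then forces $P_N$ or $Q_N$ to exhaust the interval; say $A \cap \overline{X \cap U_N} = \emptyset$. By compactness some open neighborhood $W$ of $A$ satisfies $W \cap X \cap U_N = \emptyset$, and re-applying connectedness im-kleinen at a point of $W \cap \overline X$ produces a smaller continuum $K' \subset \overline X \cap W$ with an interior arc $A'$, yielding $K' \subset V_N \cup S_N$.

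The step I anticipate as the main obstacle is extracting a contradiction from this one-sided configuration, and my plan is to invoke Proposition \ref{p4}. By Proposition \ref{p2}(d) applied at points of $A'$, the continuum $K'$ contains late return points $f(r_1), f(r_2) \in V_N$ with $N < r_1 < r_2$, close to distinct interior points of $A'$. The ray sub-arc $f[r_1, r_2]$ together with a portion of $S_N$ passing through $A'$ should form a theta curve $\alpha_1 \cup \alpha_2 \cup \alpha_3$ whose separating pair $\alpha_1 \cup \alpha_3$ encloses the inner arc $\alpha_2$, taken to be a sub-arc of $f[r_1, r_2]$; placing a disc $D$ around a point of $A'$ strictly between the endpoints of the theta curve then yields, via Proposition \ref{p4}, an arc $\beta \subset \alpha_2 \cap D$ forcing a late portion of $X$ to enter $D \cap U_N$, contradicting $W \cap X \cap U_N = \emptyset$. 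The chief technical difficulty is the explicit planar construction of the theta curve so that the hypotheses of Proposition \ref{p4} are met, in particular arranging that $\alpha_1 \cup \alpha_3$ is a simple closed curve bounding $\alpha_2$; this should be possible thanks to the smallness of $K'$, the planar structure of $S_N$, and the proximity of the late returns to $A'$.
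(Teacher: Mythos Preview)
Your reduction is sound through the point where the $P_N/Q_N$ connectedness argument shows that some sub-arc $A$ of the ray is approached by late returns from only one side of $S_N$. The final paragraph, however, is a genuine gap: the $\theta$-curve you propose cannot be assembled from the listed pieces, since $f[r_1,r_2]$ has its endpoints in $V_N$ (off $S_N$) while any ``portion of $S_N$ passing through $A'$'' has its endpoints on $S_N$, so no three arcs with common endpoints are produced. Even if some $\theta$-curve were formed, Proposition~\ref{p4} merely yields an arc $\beta$ separating two prescribed points inside a disc; nothing in its conclusion forces any piece of $X$ into $U_N$, so the advertised contradiction with $W\cap X\cap U_N=\varnothing$ does not follow. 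The deeper problem is that one-sidedness of a single arc of the ray is not, by itself, contradictory---you need it to clash with some additional structural constraint, and your setup has none.

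The paper supplies that constraint via a different configuration. Inside the given open set it places \emph{three} disjoint discs $D_1,D_2,D_3$ with continua $K_i\subset D_i^{\mathrm o}$ having interior in $\overline X$; since each $K_i$ is visited by the ray infinitely often, one extracts seven pairwise-disjoint arcs of $X$ meeting every $K_i$, and pigeonhole gives three of the same minimal type (after relabeling, $K_1K_2K_3$-minimal). Collapsing $K_1$ and $K_3$ to points turns these three arcs $\alpha_1,\alpha_2,\alpha_3$ into a genuine $\theta$-curve, and Proposition~\ref{p4} produces $\beta\subset\alpha_2\cap D_2$ separating $p\in\alpha_1\cap K_2$ from $q\in\alpha_3\cap K_2$ in $D_2$. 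If $\beta$ contains no two-sided limit point then (by a one-sidedness argument in the spirit of your $P_N/Q_N$ step) $\beta$ can be pushed to a nearby arc $\gamma\subset D_2$ that still separates $p$ from $q$ but misses $K_2$; this contradicts the connectedness of $K_2$. The three-disc pigeonhole and the resulting separation of the middle continuum $K_2$ are precisely the missing ingredients in your plan.
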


\begin{proof}Suppose that $X$ is self-entwined and $\overline X$ is connected im-kleinen at a dense set of points. Let $U$ be any non-empty open subset of $\mathbb R^2$ meeting $X$. Let $D_1,D_2,D_3$ be pairwise-disjoint closed discs in $U$  whose interiors intersect $\overline X$. For each $i\in \{1,2,3\}$ let $K_i\subset X$ be a continuum which has interior in $X$   and is contained  in the interior of $D_i$.   There are 7 pairwise-disjoint   arcs in $X$ which meet every $K_i$. Each one contains an arc that is $K_1K_2K_3$, or $K_2K_1K_3$, or $K_2K_3K_1$-minimal.  At least 3 of the 7 minimal arcs, say $\alpha_1,\alpha_2,\alpha_3$, are of the same type.  By re-indexing the $D's$ and $K's$ we may assume that the three arcs are $K_1K_2K_3$-minimal. Upon identifying $K_1$ and $K_3$ with points $a$ and $b$, we have that $\alpha_1\cup \alpha_2\cup \alpha_3$ is a $\theta$-curve. Let us assume that $\alpha_2$ is separated from $\infty$ by $\alpha_1\cup \alpha_3$. Choose $p\in \alpha_1\cap K_2$ and $q\in \alpha_3\cap K_2$. By Proposition \ref{p4}, there exists an arc $\beta\subset \alpha_2\cap D_2$ which separates $p$ from $q$ in $D_2$, and is such that  $\beta\cap \partial D_2$ consists of only the two endpoints of $\beta$.  We claim that $\beta$ contains a two-sided limit point of $X$.

Let $n$ be an integer such that $\beta\subset f[0,n]$. Since $\beta$ is nowhere dense in $X$, $\beta$ is contained in the union of $\overline{U_n\cap X}$ and $\overline{V_n\cap X}$. If each of these sets intersects $\beta$, then by connectedness of $\beta$ there exists $x\in\overline{U_n\cap X}\cap \overline{V_n\cap X}\cap \beta.$ Then $x$ is a two-sided limit point of $X$ in $U$. Otherwise, one of the two sets misses $\beta$  which we will show leads to a contradiction. The contradiction will be that there exists an arc $\gamma$ that misses $K_2$, and separates $p$ and $q$ in $D_2$.

Without loss of generality, assume that $\overline{V_n\cap X}\cap \beta=\varnothing$. Let $G$ be the component of $\beta$ in $D_2\cap \overline{V_n}$. Since $\beta\cap \partial D_2$ consists of only the two endpoints of $\beta$, we have that $G$ is a closed topological disc and $\beta\subset \partial G$. Further,  $G\setminus \overline{V_n\cap X}$ is a neighborhood of $\beta$ in  the space $G$. Thus there exists an arc $\gamma\subset G\setminus \overline{V_n\cap X}$ which has the same endpoints as $\beta$ and is otherwise contained in $G^\mathrm{o}$. Choosing $\gamma$ close to $\beta$ (closer in Hausdorff distance than $d(p,\beta)$ and $d(q,\beta)$) will ensure that it separates $D_2$ between $p$ and $q$. Moreover, $\gamma\cap \overline X\subset \partial D_2$ by the fact $(G^\mathrm{o} \setminus \overline{V_n\cap X})\cap\overline X\subset (V_n \setminus \overline{V_n\cap X})\cap\overline X=\varnothing.$  Therefore $\gamma$ misses $K_2$, which  contradicts that $K_2$ is connected. \end{proof}

 \begin{ur}If $\overline X$ is indecomposable, the set of two-sided limit points of $X$ may be empty. This is the case when $X$ is the accessible composant (arc component of the endpoint) of the Knaster buckethandle continuum.  \end{ur}

\section{Proof of Theorem \ref{t1}}

Let $X$ be a self-entwined ray in $\mathbb R^2$. Let $f:[0,\infty)\to X$ be a continuous one-to-one surjection. We want to show that $\overline X$ is non-Suslinian. This follows immediately from Proposition \ref{p3} if  $\overline X$ is not connected im-kleinen at a dense set of points. For the remainder of the proof we may therefore assume that: \textit{$\overline X$ is connected im-kleinen at a dense set of points.}  


There exists a proper subcontinuum $K$ of $\overline X$ with interior in $\overline X$, and a disc $D\subset \mathbb R^2$ such that $D$ lies   in the unbounded component of $\mathbb R^2\setminus K$ and $D^\mathrm{o}\cap X\neq\varnothing$. We will show that $\overline X$ contains  an uncountable collection of pairwise-disjoint subcontinua stretching from $K$ to $D$.


By working in the unbounded component of $\mathbb R^2\setminus K$, we may assume that $K$ is non-separating.  Hence  if $\alpha$ is any $K(X\setminus K)K$-minimal arc, then $K\cup \alpha$ has exactly two complementary components. We may also assume that the ray's endpoint $f(0)$ belongs to $K$, because there exists  $f(t)\in K$ and  $f[t,\infty)$ is a self-entwined ray with the same closure as $X$.  Under this assumption, for every  $x\in X\setminus K$ there exists a (unique) $K\{x\}K$-minimal arc in $X$. This arc is found by tracing $x$ to $K$,  backwards and forwards along the ray.  


 For each $x\in X\setminus K$ we may now define:

\begin{itemize}
\item $\alpha(x)$ to  be the unique $K\{x\}K$-minimal arc of $X$, 
\item $U(x)$  to be the bounded component of $\mathbb R^2\setminus (K\cup \alpha(x))$, and  
\item $V(x)$  to be the unbounded component of $\mathbb R^2\setminus (K\cup \alpha(x))$. 

\end{itemize}

\begin{ur}Note that $\alpha(x)\subset \partial U(x)\subset \alpha(x)\cup K$ and likewise for $V(x)$. Additionally,  $\{U(x),\alpha(x),V(x)\}$ partitions the $K(X\setminus K)K$-minimal arcs of $X$, with the possible exception that $\alpha(x)$ may share an endpoint in $K$ with an arc in $U(x)$ or $V(x)$.\end{ur}

\begin{uobs}\label{o6}
 If $x$ is any two-sided limit point of $X$ in $X\setminus K$,  then $x\in \overline {U(x)\cap X}\cap \overline{V(x)\cap X}.$\end{uobs}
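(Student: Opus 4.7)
The plan is to localize the two-sided-limit property at $x$ to a small disc around $x$ in which both $\alpha(x)$ and $f[0,n]$ restrict to the same cross-cut, and then transfer approaching sequences from the $U_n,V_n$ sides to the $U(x),V(x)$ sides. Writing $x=f(t)$, $\alpha=\alpha(x)=f[s_1,s_2]$, and $U=U(x),\,V=V(x)$, let $n>t$ witness that $x$ is a two-sided limit point, with $S_n\supset f[0,n]$ the associated simple closed curve and $U_n,V_n$ its complementary components.

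The first step, and the main technical obstacle, is to construct an open disc neighbourhood $B$ of $x$ such that (i) $B\cap K=\varnothing$, (ii) $B\cap S_n\subset f[0,n]$, and (iii) for some $\delta\in(0,\min(t-s_1,\,s_2-t,\,n-t))$ one has $B\cap f[0,n]=B\cap\alpha=f((t-\delta,t+\delta))$, this sub-arc being a cross-cut of $B$ with its two endpoints on $\partial B$. Existence of such $B$ rests on the fact that $f\restriction[0,n]$ and $f\restriction[s_1,s_2]$ are homeomorphisms onto compact planar arcs, combined with the standard plane-topology fact (already invoked in Proposition \ref{p4}) that any interior point of a planar arc has arbitrarily small disc neighbourhoods whose intersection with the arc is a cross-cut. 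I would pin down condition (iii) by choosing $\delta$ small enough that uniform continuity of the inverses of $f\restriction[s_1,s_2]$ and $f\restriction[0,n]$ forces $f^{-1}(B)\cap[0,n]=f^{-1}(B)\cap[s_1,s_2]=(t-\delta,t+\delta)$.

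Once $B$ is fixed, the cross-cut divides $B$ into exactly two open components $B_1,B_2$, each connected and disjoint from $K\cup\alpha$, hence each contained in either $U$ or $V$. The preceding Remark gives $x\in\alpha\subset\partial U\cap\partial V$, so every neighbourhood of $x$ meets both $U$ and $V$; since $B$ has only two local components, after relabelling I may assume $B_1\subset U$ and $B_2\subset V$. The same dichotomy applied to $S_n$ (using $x\in S_n\subset\partial U_n\cap\partial V_n$) lets me assume, after possibly relabelling $U_n$ and $V_n$, that $B_1\subset U_n$ and $B_2\subset V_n$. The conclusion then falls out: for any $x_k\in U_n\cap X$ with $x_k\to x$, for large $k$ we have $x_k\in B\setminus S_n=B_1\cup B_2$, and since $x_k\in U_n$ while $B_2\subset V_n$, it follows that $x_k\in B_1\subset U$, giving $x\in\overline{U\cap X}$. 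Symmetrically $x\in\overline{V\cap X}$.
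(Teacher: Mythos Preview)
Your proof is correct and follows essentially the same route as the paper's: build a small disc (the paper uses a closed domain $G$) around $x$ whose intersection with both $f[0,n]$ and $\alpha(x)$ is the same cross-cut $\beta$, observe that the two complementary pieces of the disc are simultaneously the local traces of $U(x),V(x)$ and of $U_n,V_n$, and transfer the two-sided limiting from the $S_n$-sides to the $\alpha(x)$-sides. The paper justifies the existence of the disc by invoking an ambient homeomorphism straightening $f[0,n]$; your cross-cut/uniform-continuity sketch accomplishes the same thing, though note that uniform continuity of the inverses alone only gives $f^{-1}(B)\cap[0,n]\subset(t-\delta,t+\delta)$ rather than equality with an interval---you still need the Schoenflies-type tameness fact you cite to get an actual cross-cut.
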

 
 \begin{proof}  Suppose that $x=f(t)$ is a two-sided limit point in $X\setminus K$. Let $n>t$ be such that $X$ limits to $x$ from both sides of $f[0,n]$. There exists a closed domain  $G$ such that $x\in G^{\mathrm{o}}$,
 \begin{align*}
 G&\cap K=\varnothing,\\
 G&\cap S_n\subset f(0,n)\text{, and}\\
 G&\cap \alpha(x)=G\cap f[0,n]=\beta,
 \end{align*}
 where $\beta$ is an arc  that contains $x$ and  cuts $G$ into exactly two pieces. This is easy to see if one uses the fact that $f[0,n]$ is ambiently homeomorphic to a straight arc in which $f[0,n]\cap \alpha(x)$ is a neighborhood of $x$.  
 
 Let $G_0$ and $G_1$ be the components of $G\setminus \beta$. Since $G\setminus \beta\subset \mathbb R^2\setminus (K\cup \alpha(x))$ and $U(x)$ and $V(x)$ each intersect $G$, without loss of generality we may assume $G_0\subset U(x)$ and $G_1\subset V(x)$. Likewise, since $G\setminus \beta\subset \mathbb R^2\setminus S_n$ and $U_n$ and $V_n$ each intersect $G$, there exists $i\in \{0,1\}$ such that  $G_i\subset U_n$ and $G_{1-i}\subset V_n$. Then $G\cap U_n=G_i$ and $G\cap V_n=G_{1-i}$.  Now let $W$ be any open set containing $x$. Since $X$ limits to $x$ from both sides of $f[0,n]$, $W\cap G_i \cap X$ and $W\cap G_{1-i} \cap X$ are each non-empty. Therefore $W\cap U(x) \cap X$ and $W\cap V(x) \cap X$ are each non-empty. This shows that $x\in \overline {U(x)\cap X}\cap \overline{V(x)\cap X}$. \end{proof}

We will now define a Cantor tree of compact regions of $\mathbb R^2$, $$\big\{D_\sigma: \sigma\in 2^{<\omega}\big\},$$  and two-sided limit points $x_\sigma\in X $ so that:

\begin{enumerate}
\item $\alpha(x)\subset D_\sigma$ for all $x\in D_{\sigma}\cap X$,

\item $x_{\sigma^\frown i}\in (D\cap D_\sigma)^{\mathrm{o}}$ for each $i\in \{0,1\}$, and

\item $\overline{U(x_{\sigma^\frown 1})}\setminus K\subset U(x_{\sigma^\frown 0})$.
\end{enumerate}
Each $D_\sigma$ will be bounded by $K$ and finitely-many arcs of $X$.

\medskip
 
\textit{First step}: To begin, let $a \in D^\mathrm{o}\cap X$ be a two-sided limit point of $X$ provided by Lemma \ref{le5}. By Observation \ref{o6} and Lemma \ref{le5} there exist  two-sided limit points $b\in D^\mathrm{o}\cap U(a)\cap X$, $ c\in D^\mathrm{o}\cap U(b) \cap X$, and $d\in D^\mathrm{o}\cap U(c) \cap X$. Define $D_\varnothing=\overline{U(a)}\setminus U(d)$ and put $x_0=b$ and $x_1=c$.  

It is easy to see that (1) and (2) hold for $\sigma=\varnothing$. As for (3), $\overline{U(x_{1})}\setminus K\subset  U(x_{1})\cup \alpha (x_1) \subset U(x_{0})$. 

\medskip
 
\textit{Inductive step}: Now suppose that $\sigma\in 2^n$ and $D_\sigma$ and $x_{\sigma^\frown 0}$ and $x_{\sigma^\frown 1}$ satisfy (1) through (3). Define  
\begin{align*} D_{\sigma^\frown 0}&=D_\sigma\cap \overline{V(x_{\sigma^\frown 0})}\\
 D_{\sigma^\frown 1}&=D_\sigma\cap \overline{U(x_{\sigma^\frown 1})}.\end{align*}

 By (2), Observation \ref{o6} and Lemma \ref{le5}, there exists a two-sided limit point $x_{\sigma^\frown 0,0}\in (D\cap D_{\sigma})^\mathrm{o}\cap V(x_{\sigma^\frown 0})\cap X$. And  there exists a two-sided limit point $x_{\sigma^\frown 0,1}\in (D\cap D_{\sigma})^\mathrm{o}\cap V(x_{\sigma^\frown 0})\cap U(x_{\sigma^\frown 0,0})\cap X.$ Points   $x_{\sigma^\frown 1,0}$ and $x_{\sigma^\frown 1,1}$ in $D_{\sigma^\frown 1}$ can be found in the same way, with  $U(x_{\sigma^\frown 1})$ in the place of $V(x_{\sigma^\frown 0})$. See Figures 4 and 5.

 (1) through (3) now hold for all   $\sigma\in 2^{n+1}$.  (1) is verified by combining  the condition on  $D_{\sigma\restriction n}$ with the fact that $\overline{V(x_{\sigma^\frown 0})}\cap X$ and $\overline{U(x_{\sigma^\frown 1})}\cap X$ are  unions of arcs  $\alpha(x)$ (see Remark 2). (2) is obvious, and (3) holds for the reason given in the base case.

 \newpage

\begin{figure}[h]
\includegraphics[scale=0.2]{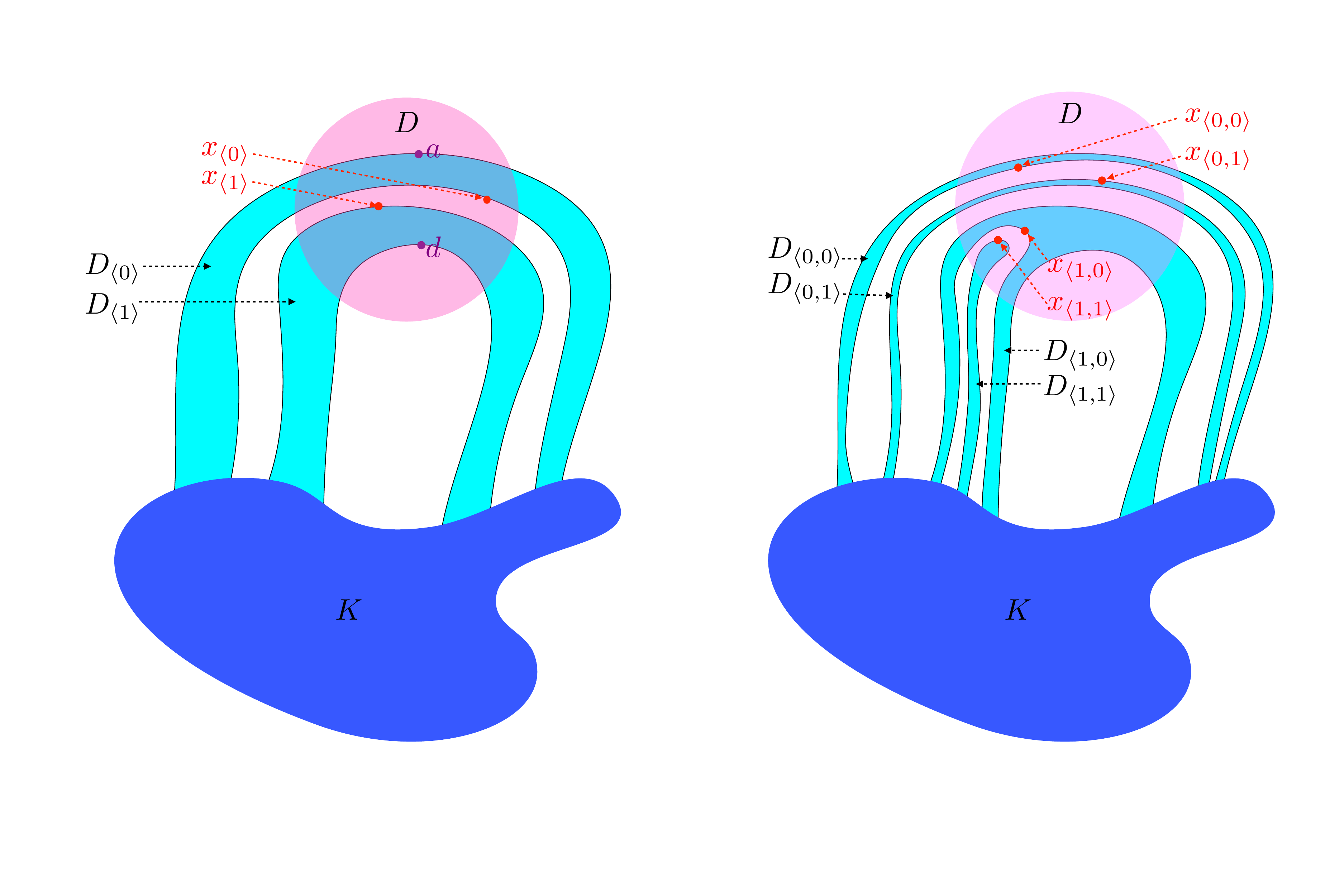}
\caption{Construction of $D_{\langle 0\rangle}$ and $D_{\langle 1\rangle}$.}
\end{figure}

\begin{figure}[h]
\includegraphics[scale=0.2]{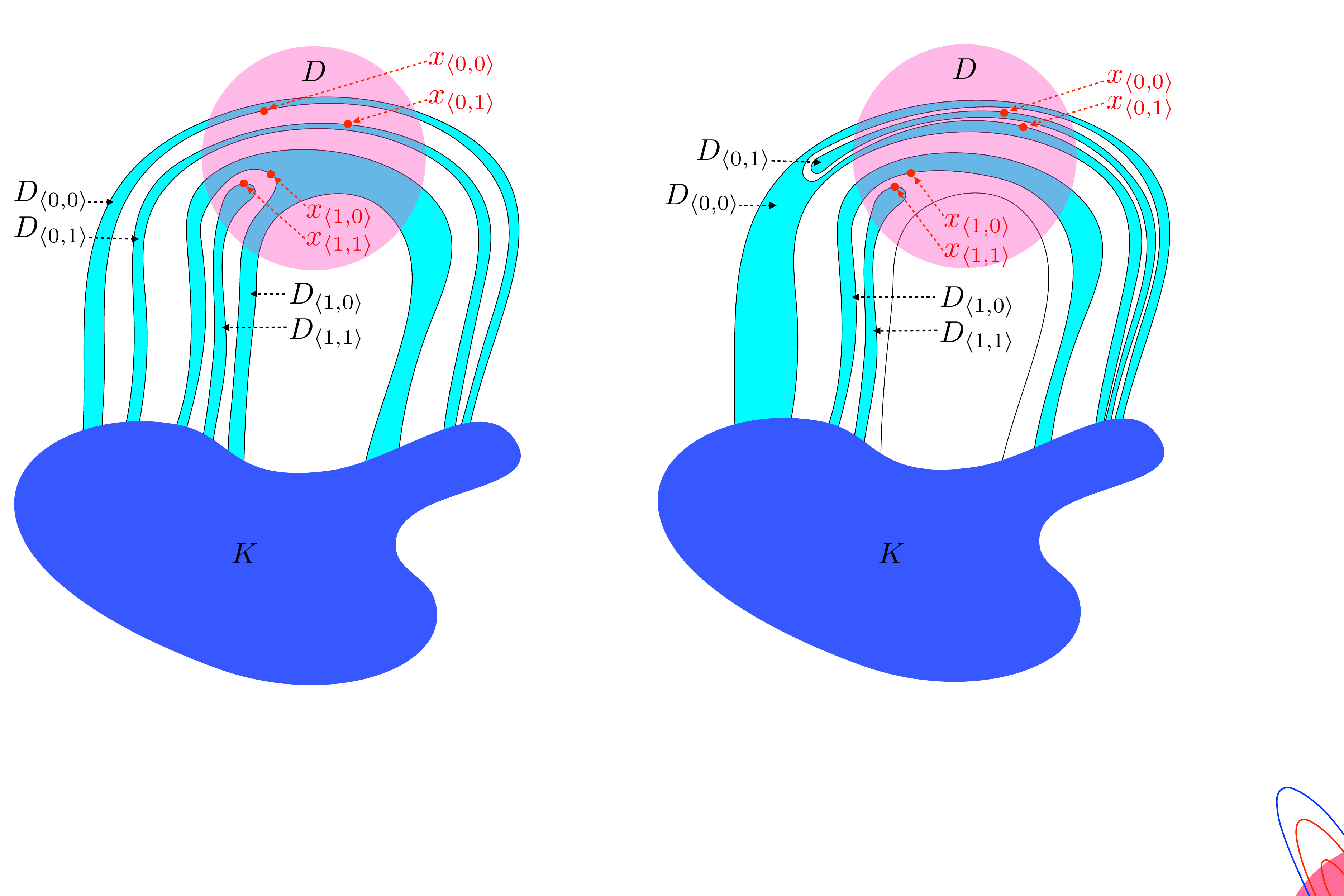}
\caption{Two possible arrangements of $D_\sigma$  ($\sigma\in 2^2$) within $D_{\langle 0\rangle}$ and $D_{\langle 1\rangle}$.}
\end{figure}

For all $\sigma\in 2^{<\omega}$ we have:

\begin{itemize}\renewcommand{\labelitemi}{\Tiny$\blacksquare$}
\item $K\cup (D_{\sigma}\cap X)$ is a connected set that meets both $K$ and $D$ by (1) and (2),
\item $D_{\sigma^\frown 0},D_{\sigma^\frown 1}\subset D_{\sigma}$ by definition, and
\item $D_{\sigma^\frown 0}\cap D_{\sigma^\frown 1}\setminus K=\varnothing$ by (3). 
\end{itemize}

For every  $\tau \in 2^\omega$ define $$K_\tau=\bigcap_{n=0}^\infty\overline{K\cup (D_{\tau\restriction n}\cap X)}.$$ Then $K_\tau$ is the intersection of a decreasing sequence of continua which contain $K$ and intersect $D$. Therefore $K_\tau$ is  a continuum that contains $K$ and intersects $D$ \cite[p.6]{nad}.

Let $W$ be an open set containing $K$ such that $\overline W\cap D=\varnothing$. For each  $\tau\in 2^\omega$ let $M_\tau$ be the component of some  point of $K_\tau\cap D$ in $K_\tau\setminus W$. Note that different $\tau's$ produce disjoint $M_{\tau}$'s. And by the boundary bumping principle  each   $M_\tau$ is a non-degenerate subcontinuum of $\overline X$ \cite[Theorem 5.4]{nad}. We conclude that $\{M_\tau:\tau\in 2^\omega\}$ is an uncountable collection of pairwise-disjoint, non-degenerate subcontinua of $\overline X$. Therefore $\overline X$ is non-Suslinian. 

This completes the proof of Theorem \ref{t1}. \hfill $\blacksquare$

\medskip

\end{document}